\def\ind{\operatorname{ind}}
\newcommand{\im}{\mathop{\rm Im}}
\newcommand{\Vect}{\mathop{\rm Vect}}
\newcommand{\spin}{\mathop{\rm spin}}
\newcommand{\Mat}{\mathop{\rm Mat}}
\newtheorem{theorem}{Theorem}[section]
\newtheorem{lemma}[theorem]{Lemma}
\newtheorem{proposition}[theorem]{Proposition}
\theoremstyle{definition}
\newtheorem{definition}[theorem]{Definition}
\newtheorem{assumption}[theorem]{Assumption}
\newtheorem{example}[theorem]{Example}
\newtheorem{remark}[theorem]{Remark}
\def\im{\operatorname{Im}}
\title{Poincar\'e isomorphism in $K$-theory \\
on manifolds with edges}
\author{V.~E.~Nazaikinskii, A.~Yu.~Savin, and B.~Yu.~Sternin}
\begin{document}

\maketitle
\begin{abstract}
The aim of this paper is to construct the Poincar\'e isomorphism in
$K$-theory on manifolds with edges.  We show that the Poincar\'e
isomorphism can naturally be constructed in the framework of
noncommutative geometry. More precisely, to a manifold with edges
we assign a noncommutative algebra  and construct an isomorphism
between the $K$-group of this algebra and the $K$-homology group of
the manifold with edges viewed as a compact topological space.
\end{abstract}

\section{Introduction}

Let $M$ be a smooth closed even-dimensional manifold equipped with
a complex spin structure  ($\spin^c$-structure in what follows).
Then in $K$-theory we have the Poincar\'e isomorphism
\begin{equation}
\label{smooth1} \tau\colon  K^0(M) \longrightarrow K_0(M)
\end{equation}
between the $K$-group of vector bundles on the manifold and the
$K$-homology group~\cite{BaDo1,BaDo4,Kam1}. From the viewpoint of
analysis (to which we stick here), the $K$-homology group is
naturally identified with the group generated by elliptic operators
on $M$. In this language, the mapping \eqref{smooth1} takes the
class of a vector bundle on $M$ to the class of the $\spin^c$ Dirac
operator twisted by the vector bundle.

The aim of this paper is to construct a Poincar\'e isomorphism
similar to~\eqref{smooth1} for the case of manifolds with edges.
Manifolds with edges are a class of manifolds with nonisolated
singularities. In this situation, we show that the Poincar\'e
isomorphism can naturally be constructed in the framework of
noncommutative geometry. More precisely, to a manifold
$\mathcal{M}$ with edges we assign a noncommutative algebra
$\mathcal{A}$ and obtain an isomorphism
\begin{equation}
\label{new1} K_0(\mathcal{A})\longrightarrow K^0(C(\mathcal{M}))
\end{equation}
between the $K$-group of $\mathcal{A}$ and the analytic
$K$-homology group of the algebra $C(\mathcal{M})$ of continuous
functions on the manifold with edges viewed as a compact
topological space. If $\mathcal{M}$ is a smooth manifold, then
$\mathcal{A}$ is just $C(\mathcal{M})$ and \eqref{new1} is none
other than~\eqref{smooth1} (the $K$-groups of spaces being
identified with the $K$-groups of the corresponding function
algebras).

The main difficulty in constructing the isomorphism~\eqref{new1} is
that one has to find a realization of the Dirac operator on the
smooth part of $\mathcal{M}$ as an elliptic operator over the
algebra  $C(\mathcal{M})$ of continuous functions on $\mathcal{M}$.
It turns out that this is not always possible. The corresponding
obstruction was computed in \cite{NaSaSt3} and is a generalization
of the Atiyah--Bott obstruction~\cite{AtBo2} in the theory of
classical boundary value problems.  For the case in which the
obstruction vanishes, we construct a Fredholm realization of the
Dirac operator in the class of boundary value problems introduced
in \cite{SaSt11}.

The research was supported by the RFBR under grant No.~06-01-00098
and by DFG in the framework of the project DFG 436 RUS
113/849/0-1\circledR ``$K$-theory and noncommutative geometry of
stratified manifolds." This research was carried out during our
stay at the Institute for Analysis, Hannover University (Germany).
We are grateful to Professor E.~Schrohe and other members of the
university staff for their kind hospitality.

\section{Geometry}\label{pardva}

\paragraph{Manifolds with edges.}
Let $M$ be a compact smooth manifold with boundary $\partial M$,
and suppose that $\partial M$ is the total space of a smooth
locally trivial bundle
$$
\pi \colon \partial M\longrightarrow X
$$
over a closed smooth base $X$ with fiber
$\pi^{-1}(x)\equiv\Omega_x$ over $x\in X$, which is a smooth closed
manifold. For simplicity, we assume that $X$ and $\Omega_x$ are
connected.

\begin{definition}
The topological space obtained from  $M$ by identifying points in
each fiber of $\pi$ is called the \emph{manifold with edge}
corresponding to the pair $(M,\pi)$. Let $\mathcal{M}$ denote the
manifold with edge.
\end{definition}

In this paper, we construct Poincar\'e duality for the case in
which $M$ and $X$ are even-dimensional manifolds (and hence the
fiber $\Omega$ is odd-dimensional automatically).

\paragraph{1. Riemannian metric.}

We fix a Riemannian metric on $M$ and assume that it is of product
type
$$
g_M=dt^2+g_{\partial M}
$$
in a collar neighborhood $\partial M\times [0,1)$ of the boundary,
where $t\in [0,1)$ is the coordinate normal to the boundary and
$g_{\partial M}$ is a metric on the boundary compatible with $\pi$
(a so-called \emph{submersion metric}). Namely, a connection in
$\pi$ gives the direct sum decomposition
\begin{equation}\label{vertik1}
T\partial M\simeq T^H\partial M\oplus T^V\partial M\simeq
\pi^*TX\oplus T\Omega
\end{equation}
of the tangent bundle into horizontal and vertical components. The
vertical component coincides with the tangent bundle to the fibers,
and the horizontal component is naturally isomorphic to $\pi^*TX$.
Using the decomposition~\eqref{vertik1}, one defines a submersion
metric on the boundary as
\begin{equation}
\label{vertical} g_{\partial M}=\pi^*g_X+g_\Omega,
\end{equation}
where $g_X$ is a metric on $X$ and $g_\Omega$ is a family of
metrics on the fibers $\Omega$.

For the cotangent bundle $T^*\partial M$, there is a similar
decomposition
\begin{equation}
\label{vertik2} T^*\partial M\simeq \pi^*T^*X\oplus T^*\Omega.
\end{equation}

{\small Recall how the decompositions \eqref{vertik1} and
\eqref{vertik2} are defined. Let
$$
\omega\colon T\partial M\to T\Omega,\qquad \omega|_{T\Omega}=Id,
$$
be a connection in $\pi$ (a projection onto the vertical
subbundle). Then \eqref{vertik1} is, by definition,
$$
T\partial M\simeq \ker\omega\oplus T\Omega.
$$
The isomorphism $\ker\omega\stackrel\simeq\to \pi^*TX$ is induced
by the projection $\pi_*\colon T\partial M\to TX$.

In a similar way,  \eqref{vertik2} is defined as
$$
T^*\partial M\simeq \pi^*T^*X\oplus (\ker \omega)^\perp,
$$
where the fiber of the subbundle $(\ker \omega)^\perp$ is, by
definition, the space of $1$-forms annihilating the horizontal
subspace $\ker\omega$. The isomorphism $(\ker\omega)^\perp\to
T^*\Omega$ is induced by restriction of forms to the vertical
subbundle $T\Omega\subset T\partial M$. }

\paragraph{2. $\spin^c$-structure and the Dirac operator.}

Suppose that both the base $X$ and the fibers $\Omega$ are equipped
with $\spin^c$-structures. These data induce a $\spin^c$-structure
on $\partial M$.\footnote{For our purposes, one can use the
following definition (cf.\ \cite{HiRo1}): a $\spin^c$-structure on
$M$ is a bundle $S_M$ of irreducible modules over the bundle
$Cl(M)$ of Clifford algebras on $M$. Here we assume that some
Riemannian metric on $M$ is given. If $M$ is even-dimensional and
oriented, then $S_M\simeq S_M^+\oplus S_M^-$.} We shall also assume
that $M$ is equipped with a $\spin^c$-structure compatible with
that on $\partial M$.

Then we can define the $\spin^c$ Dirac operator
$$
D_M\colon  C^\infty(M, S^+_M)\longrightarrow
C^\infty(M,S^-_M),\qquad S_M\simeq S^+_M\oplus S^-_M,
$$
on $M$. This operator acts on sections of spinor bundles
$S^\pm_M\in \Vect(M)$ (e.g., see~\cite{Gil1}).

\paragraph{3. The principal symbol of the Dirac operator.}

In this paper, we shall only deal with the (principal) symbol of
the Dirac operator. Let us describe it explicitly. First,  by our
choice of the direct product metric near the boundary,
\begin{equation}
\label{dtplusa} D_M\Bigr|_{\partial M\times[0,1)}\simeq \frac
\partial {\partial t}+D_{\partial M},
\end{equation}
where $D_{\partial M}\colon C^\infty(\partial M, S_{\partial
M})\longrightarrow C^\infty(\partial M, S_{\partial M})$ is the
Dirac operator on the boundary.

Second, the spinor bundle of the boundary has the decomposition
(see \cite{BiCh1})
\begin{equation}
\label{fiberspinor} S_{\partial M}\simeq \pi^* S_X\otimes S_\Omega=
\pi^* (S^+_X\oplus S^-_X)\otimes S_\Omega,
\end{equation}
where $S_X\simeq S^+_X\oplus S^-_X\in\Vect(X)$ is the spinor bundle
of the base and $S_\Omega\in\Vect(\partial M)$ is the spinor bundle
of the fibers.

In this notation, the symbol of $D_{\partial M}$ is given by
\begin{equation}\label{dirac1}
\sigma(D_{\partial M} )(\xi,\eta)=
\left(%
\begin{array}{cc}
  1\otimes c_\Omega(\eta) & c_X(\xi)\otimes 1 \\
  c_X(\xi)\otimes 1 & -1\otimes c_\Omega(\eta) \\
\end{array}%
\right)\colon  \begin{array}{c}
  \pi^* S^+_X\otimes S_\Omega \\
  \oplus \\
  \pi^* S^-_X\otimes S_\Omega \\
\end{array}
\longrightarrow \begin{array}{c}
  \pi^* S^+_X\otimes S_\Omega \\
  \oplus \\
  \pi^* S^-_X\otimes S_\Omega \\
\end{array}
,
\end{equation}
where for $T^*\partial M$ we use the decomposition \eqref{vertik2}
and write
$$
(\xi,\eta)\in \pi^*T^*X\oplus T^*\Omega\simeq T^*\partial M
$$
and
\begin{gather*}
c_X\colon  T^*X\otimes S_X\longrightarrow S_X,\quad c_\Omega\colon
T^*\Omega\otimes S_\Omega\longrightarrow S_\Omega,
 \\
c_X(\xi)c_X(\xi')+c_X(\xi')c_X(\xi)=2(\xi,\xi')_{g_X},
 \\
c_\Omega(\eta)c_\Omega(\eta')+c_\Omega(\eta')c_\Omega(\eta)
=2(\eta,\eta')_{g_\Omega},
\end{gather*}
are Clifford multiplications.

Let $Q_0$ be the positive spectral projection of $D_{\partial M}$.
The operator $Q_0$ is a pseudodifferential operator of order zero
on $\partial M$. Its symbol is equal to the positive spectral
projection of $\sigma(D_{\partial M})$ and hence is given by
\begin{equation}
\label{calderon1} \sigma(Q_0)(\xi,\eta)=\frac 1 2
\left(%
\begin{array}{cc}
  1\otimes (1+ c_\Omega(\eta)) & c_X(\xi)\otimes 1 \\
  c_X(\xi)\otimes 1 & 1\otimes (1- c_\Omega(\eta))  \\
\end{array}%
\right)\quad \text{for } |\xi|^2+|\eta|^2=1.
\end{equation}
(This follows from~\eqref{dirac1} in view of the fact that the
square of the symbol~\eqref{dirac1} is a scalar symbol.)

\section{Boundary value problems for twisted Dirac operators}

The fibers of $\pi\colon \partial M\to X$ are odd-dimensional
$\spin^c$-manifolds. Hence we can consider the index (e.g., see
\cite{APS3})
$$
\ind D_\Omega\in K^1(X)
$$
of the family, parametrized by $X$, of self-adjoint Dirac operators
in the fibers. We shall assume that the following condition is
satisfied.
\begin{assumption}
\label{triva}
$$
\ind D_\Omega =0.
$$
\end{assumption}
It will be shown below (see Proposition~\ref{obst1}) that this
condition is necessary for the Dirac operator to define a class in
the $K$-homology of the manifold with edges.

If Assumption~\ref{triva} is satisfied, then one can make the
family $D_\Omega$ invertible by perturbing it by a smooth family of
finite rank operators \cite[Prop.~1]{MePi1}. Doing so if necessary,
we assume from now on that the family $D_\Omega$ is invertible.
Then the family of positive spectral projections
$$
\Pi_+=\Pi_+(D_\Omega)\colon
C^\infty(\Omega,S_\Omega)\longrightarrow C^\infty(\Omega,S_\Omega)
$$
of the Dirac operators in the fibers is smooth. Let $\Pi_-=1-\Pi_+$
be the family of complementary projections.

\paragraph{1. The algebra dual to the algebra of functions on
a manifold with edges.}

Let $\mathcal{T}(\Omega)$ denote the bundle of algebras over $X$
whose fiber
$$
\mathcal{T}(\Omega_x)\subset \mathcal{B}L^2(\Omega_x,S_{\Omega_x})
$$
at the point $x$ is the algebra of zero-order Toeplitz operators on
the range of the projection $\Pi_+(D_{\Omega_x})$. Namely, the
elements of $\mathcal{T}(\Omega_x)$ are operators of the form
$$
F=\Pi_+(D_{\Omega_x}) \widetilde F \Pi_+(D_{\Omega_x})
$$
acting on sections of $S_{\Omega_x}\in \Vect(\Omega_x)$, where
$\widetilde F\colon C^\infty(\Omega_x,S_{\Omega_x})\to
C^\infty(\Omega_x,S_{\Omega_x})$ is a pseudodifferential operator
with scalar symbol $\sigma(\widetilde F)\in C^\infty(\Omega_x)$.
(Note that $\mathcal{T}(\Omega_x)$ is a subalgebra of the algebra
of classical pseudodifferential operators acting on sections of
$S_{\Omega_x}\in \Vect(\Omega_x)$.) By definition, we set
$\sigma(F)=\sigma(\widetilde F)$ (the symbol of the Toeplitz
operator $F$); this is, of course, different from the symbol of $F$
as of a pseudodifferential operator.

Consider the noncommutative algebra
\begin{equation}
\label{algebra1} \mathcal{A}=\{(f,F)\in C^\infty(M)\oplus
C^\infty(X,\mathcal{T}(\Omega))\;|\; \sigma(F)=f|_{\partial
M}\}.
\end{equation}
Its elements are pairs $(f,F)$, where $f$ is a function on $M$, $F$
is a section of $\mathcal{T}(\Omega)$, and the restriction of $f$
to $\partial M$ is equal to $\sigma(F)$.

This algebra naturally acts on the space
$$
C^\infty(M)\oplus C^\infty(\partial M,S_\Omega).
$$
(The action on the subspace $0\oplus
\operatorname{Im}\Pi_-(D_\Omega)$ is defined to be zero.)

Consider the norm closure $\overline{\mathcal{A}}$ of the algebra
$\mathcal{A}$ in the space of operators on $L^2(M)\oplus
L^2(X,L^2(\Omega,S_\Omega))$. (Note that the operators in
$\mathcal{A}$ take the subspace $0\oplus
L^2(X,\Pi_-L^2(\Omega,S_\Omega))$ to zero.) The algebra
$\mathcal{A}$ is a local unital $C^*$-subalgebra of
$\overline{\mathcal{A}}$. Its even $K$-group is generated by formal
differences of projections in matrix algebras over  $\mathcal{A}$.

To a projection\footnote{A projection $\mathcal{P}$ is a compatible
pair $(p,P)$ of projections. The first component defines a
subbundle
$$
\im p \subset M\times \mathbb{C}^n,
$$
and the second component defines a bundle
$$
\im P\subset C^\infty(\partial M,S_\Omega)\otimes \mathbb{C}^n
$$
(in general, infinite-dimensional) over $X$.}
$$
\mathcal{P}=(p,P)\in \Mat(n,\mathcal{A}),
$$
we wish to assign a Fredholm boundary value problem and an element
in the $K$-homology of the algebra $C(\mathcal{M})$ of continuous
functions on $\mathcal{M}$. Here we treat $C(\mathcal{M})$ as the
closure of the algebra $C^\infty(\mathcal{M})\subset C^\infty(M)$
of smooth functions constant on the fibers of $\pi$.

\paragraph{2.Boundary value problems for Dirac operators.}
Consider the Dirac operator on $M$ twisted by the range of $p$
viewed as a vector bundle on $M$:
$$
D_M\otimes 1_{\im p}= (1\otimes p)(D_M\otimes
1_{\mathbb{C}^n})(1\otimes p)\colon  pC^\infty(M,S^+_M\otimes
\mathbb{C}^n)\longrightarrow pC^\infty(M,S^-_M\otimes
\mathbb{C}^n).
$$
It turns out that a compatible pair $(p,P)$ permits one to
construct a well-posed boundary value problem for the twisted Dirac
operator.

By $D_M\otimes 1_{\im\mathcal{P}}$ we denote the boundary value
problem
\begin{equation}
\label{mainbvp} \left\{
\begin{array}{ll}
  (D_M\otimes 1_{\im p})u&=f, \vspace{2mm}\\
  (\Delta^{s/2-1/4}_{\partial M}u_1+v_1)+\widetilde{D}_X^*v_2&=g_1, \vspace{2mm}\\
  (1\otimes (1-P))\left[\widetilde{D}_Xv_1-(\Delta^{s/2-1/4}_{\partial M}u_2+v_2)
  \right]&=g_2, \\
\end{array}%
\right.
\end{equation}
where $s\ge 1$ is some number and
$$
u\in pC^\infty(M,S^+_M\otimes \mathbb{C}^n),\quad f\in
pC^\infty(M,S^-_M\otimes\mathbb{C}^n),
$$
$$
u|_{\partial M}=u_1+u_2\in pC^\infty(\partial M, \pi^* S^+_X\otimes
S_\Omega\otimes\mathbb{C}^n)\oplus pC^\infty(\partial M, \pi^*
S^-_X\otimes S_\Omega\otimes\mathbb{C}^n),
$$
$$
v_1\in (1-P)C^\infty(\partial M,\pi^*S^+_X\otimes S_\Omega\otimes
\mathbb{C}^n),\qquad v_2\in (1-p)C^\infty(\partial
M,\pi^*S^-_X\otimes S_\Omega\otimes \mathbb{C}^n ),
$$
$$
g_1\in C^\infty(\partial M,\pi^*S^+_X\otimes S_\Omega\otimes
\mathbb{C}^n),\quad g_2\in (1-P)C^\infty(\partial
M,\pi^*S^-_X\otimes S_\Omega\otimes \mathbb{C}^n ).
$$
Here by $\Delta_{\partial M}$ we denote Laplace type operators on
$\partial M$ acting on sections of the corresponding bundles. Let
$$
\widetilde{D}_X,\widetilde{D}^*_X\colon  C^\infty(\partial
M,\pi^*S^\pm_X\otimes S_\Omega\otimes \mathbb{C}^n)\longrightarrow
C^\infty(\partial M,\pi^*S^\mp_X\otimes S_\Omega\otimes
\mathbb{C}^n)
$$
be pseudodifferential operators of order zero on $\partial M$ with
principal symbol
\begin{equation}
\label{volna1} c_X(\xi)\otimes 1_{S_\Omega\otimes
\mathbb{C}^n},\quad |\xi|^2+|\eta|^2=1.
\end{equation}
(This symbol is smooth on $T^*\partial M$ away from the zero
section, since $c_X(\xi)$ is a linear function.)

\begin{example}
If the edge $X$ is a point, then we can take
$\mathcal{P}=(1,\Pi_+)$. (Assumption~\ref{triva} is satisfied
automatically, since $K^1(pt)=0$.) In this case, the last equation
in \eqref{mainbvp} disappears, and we obtain the boundary value
problem
$$
\left\{
\begin{array}{ll}
   D_M u&=f, \vspace{2mm}\\
  \Delta^{s/2-1/4}_{\partial M}u_1+v_1&=g_1, \\
\end{array}%
\right.
$$
where $v_1\in (1-\Pi_+)C^\infty(\partial M, S_\Omega), g_1\in
C^\infty(\partial M, S_\Omega)$. The index of this boundary value
problem is equal to the index of the Atiyah--Patodi--Singer
boundary value problem~\cite{APS1}
$$
\left\{
\begin{array}{ll}
   D_M u&=f, \vspace{2mm}\\
  \Pi_+u|_{\partial M}&=g\in \Pi_+C^\infty(\partial M, S_\Omega).
\end{array}%
\right.
$$
\end{example}

The unknown $v_1$ and the right-hand side $g_2$ in \eqref{mainbvp}
belong to subspaces defined as the ranges of families of
pseudodifferential projections in the fibers of $\pi$. In the
general case, the range of such a family is not the space of
sections of any vector bundle on $\partial M$. Therefore, our
boundary value problem is not classical. However, boundary value
problems of this form were studied in~\cite{SaSt11}. Let us use the
finiteness theorem in the cited paper to prove that
problem~\eqref{mainbvp} defines a Fredholm operator in appropriate Sobolev
spaces.

\begin{proposition}\label{lem1}
Let $s\ge 1$. Then the boundary value problem \eqref{mainbvp}
defines a Fredholm operator
\begin{equation}\label{trio1}
D_M\otimes 1_{\im \mathcal{P}}\colon
\begin{array}{c}
  pH^s(M,S^+_M\otimes\mathbb{C}^n) \\
  \oplus \\
  (1-P)L^2(\partial M,\pi^*S^+_X\otimes S_\Omega\otimes \mathbb{C}^n) \\
  \oplus \\
  (1-p)L^2(\partial M,\pi^*S^-_X\otimes S_\Omega\otimes \mathbb{C}^n ) \\
\end{array}
\longrightarrow
\begin{array}{c}
  pH^{s-1}(M,S^-_M\otimes\mathbb{C}^n) \\
  \oplus \\
  L^2(\partial M,\pi^*S^+_X\otimes S_\Omega\otimes \mathbb{C}^n) \\
  \oplus \\
  (1-P)L^2(\partial M,\pi^*S^-_X\otimes S_\Omega\otimes \mathbb{C}^n ) \\
\end{array}.
\end{equation}
\end{proposition}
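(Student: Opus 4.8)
The plan is to reduce problem \eqref{mainbvp} to the general class of boundary value problems studied in \cite{SaSt11} and then invoke the finiteness theorem proved there. The essential observation is that \eqref{mainbvp} is a Dirac-type operator on $M$ equipped with boundary conditions whose "boundary symbol'' involves the family of Calder\'on projections $\sigma(Q_0)$ from \eqref{calderon1} together with the family of spectral projections $\Pi_+(D_\Omega)$ entering the Toeplitz algebra $\mathcal{T}(\Omega)$. First I would verify that the operator in \eqref{trio1} is well defined between the indicated Sobolev spaces: since $\widetilde D_X,\widetilde D_X^*$ are order-zero pseudodifferential operators and $\Delta_{\partial M}^{s/2-1/4}$ has order $s-1/2$, each component of the boundary operator maps continuously into the target traces $H^{s-1/2}(\partial M,\cdot)$, and after composing with the (order $1/2$) boundary reduction for $D_M\otimes 1_{\im p}$ one lands in $L^2$; the projections $p$, $P$, $1-p$, $1-P$ are order-zero and commute with these spaces up to smoothing, so all direct summands in \eqref{trio1} are honest closed subspaces.

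Next I would compute the Shapiro--Lopatinskii-type condition for \eqref{mainbvp}, i.e.\ the invertibility of its boundary value problem symbol. Freezing the normal variable and passing to the boundary symbol, the first line contributes the Calder\'on projection $\sigma(Q_0)(\xi,\eta)$ of \eqref{calderon1}, while the last two lines, built from $c_X(\xi)$ (via $\widetilde D_X$) and from the vertical projection $P$, are designed precisely so that the resulting operator family on $\pi^*S_X^+\otimes S_\Omega \oplus \pi^*S_X^-\otimes S_\Omega$ (together with the auxiliary $v_1,v_2$ blocks) is invertible for all $(\xi,\eta)\neq 0$. The key algebraic input is that the square of \eqref{dirac1} is scalar, so $\sigma(Q_0)$ is a genuine projection, and that $c_X(\xi)$ is linear in $\xi$, which makes \eqref{volna1} smooth away from the zero section even on the locus $\xi=0$ where the edge degeneration occurs. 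Checking invertibility is a finite-dimensional linear-algebra computation in the Clifford variables $c_X(\xi)$, $c_\Omega(\eta)$, carried out separately on the two regions $\eta\neq 0$ (where $c_\Omega(\eta)$ is invertible and one eliminates the vertical spinor directions) and $\eta = 0$ (where one is left with a block operator in $c_X(\xi)$ alone); in both cases the $\Delta_{\partial M}^{s/2-1/4}$ factors are positive and invertible on the relevant rays, so they do not affect invertibility of the symbol.

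Finally, having checked ellipticity in the sense of \cite{SaSt11} — that is, invertibility of the principal symbol of $D_M\otimes 1_{\im p}$ away from the zero section of $T^*M$, invertibility of the boundary value problem symbol just described, and compatibility with the projections $P$ defining the nonclassical boundary subspaces — I would quote the finiteness theorem of \cite{SaSt11} to conclude that \eqref{mainbvp} defines a Fredholm operator in the scale \eqref{trio1} for every $s\geq 1$. The hypothesis $s\geq 1$ is needed only to keep the exponent $s/2-1/4$ of $\Delta_{\partial M}$ positive and the domain $pH^s(M,\cdot)$ inside the natural domain of the (order one) operator $D_M\otimes 1_{\im p}$. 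The main obstacle in this argument is the symbol invertibility computation at the edge locus $\xi = 0$: there the usual Atiyah--Patodi--Singer boundary symbol degenerates, and one must see explicitly that the extra equations in \eqref{mainbvp} built from $\widetilde D_X$ and the Toeplitz projection $P$ restore invertibility — this is exactly the point where Assumption~\ref{triva} (vanishing of $\ind D_\Omega$), which guarantees that $\Pi_+(D_\Omega)$ can be taken smooth and that $P$ is a well-defined smooth family of projections, is used.
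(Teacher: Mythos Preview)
Your overall plan---reduce to the boundary, then invoke the finiteness theorem of \cite{SaSt11} after checking the appropriate symbol invertibility---matches the paper's approach. However, you misidentify the two-symbol structure that \cite{SaSt11} actually requires, and this is a genuine gap. The Fredholm criterion there demands invertibility of \emph{two distinct} objects: the finite-dimensional principal symbol $\sigma_{\partial M}$ on $T^*\partial M\setminus \pi^*T^*X$ (i.e.\ for $\eta\neq 0$), and an \emph{operator-valued} symbol $\sigma_X$ on $T^*X\setminus 0$, which ranges in operators on the infinite-dimensional fiber spaces $L^2(\Omega,S_\Omega\otimes\mathbb{C}^n)$. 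Your proposal treats the case $\eta=0$ as ``a block operator in $c_X(\xi)$ alone'' and calls the whole check ``a finite-dimensional linear-algebra computation,'' but at $\eta=0$ the principal symbol $\sigma_{\partial M}(P)$ must be replaced by the full fiberwise projection $P$ itself, and the system one must solve lives in $S^\pm_X\otimes L^2(\Omega,S_\Omega\otimes\mathbb{C}^n)$. The paper carries this out explicitly (its Part~A), showing trivial kernel and cokernel by an argument that uses only that $P$ is a projection (so that $2-P$ is invertible); this is where the nonclassical nature of the boundary condition enters, and it cannot be reduced to Clifford linear algebra.

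There is also a related confusion in your last paragraph: the ``edge degeneration'' in the symbol occurs at $\eta=0$ (covectors purely along the base $X$), not at $\xi=0$. At $\xi=0$ one is in the regime $\eta\neq 0$ of the ordinary principal symbol $\sigma_{\partial M}$, and the paper handles this as a separate easy subcase of its Part~B. Finally, Assumption~\ref{triva} is used earlier, to guarantee a smooth family $\Pi_+(D_\Omega)$ so that $\mathcal{A}$ and $P$ are well defined; it is not invoked again inside the symbol invertibility computation.
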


\begin{proof}

Making the standard reduction of the boundary value problem to the
boundary, we see that it suffices to prove the Fredholm property of
the system of equations
\begin{equation}\label{sist1}
\left\{
\begin{array}{ll}
  (\Delta^{s/2-1/4}_{\partial M}u_1+v_1)+\widetilde{D}_X^*v_2&=g_1, \vspace{2mm}\\
  (1\otimes (1-P))\left[\widetilde{D}_Xv_1-(\Delta^{s/2-1/4}_{\partial M}u_2+v_2)
  \right]&=g_2, \\
\end{array}%
\right.
\end{equation}
on the boundary, where the pair $(u_1,u_2)$ is in the range of the
Calder\'on projection of the Dirac operator $D_M\otimes 1_{\im p}$.
This Calder\'on projection is modulo compact operators equal to the
positive spectral projection of the Dirac operator $D_{\partial
M}\otimes 1_{\im p}$ on the boundary. Hence we can use the spectral
projection, which we denote by $Q$, instead of the Calder\'on
projection in the proof of the Fredholm property of~\eqref{sist1}.
As we mentioned earlier, $Q$ is a pseudodifferential operator with
symbol $\sigma(Q)=\sigma(Q_0)\otimes 1_{\im p}$, where the symbol
of $Q_0$ is given in~\eqref{calderon1}.

The operators on $\partial M$ occurring in~\eqref{sist1} are
special cases of operators with discontinuous symbols on fibered
manifolds~\cite{SaSt11}. The Fredholm property of such operators is
equivalent to the invertibility of the \emph{principal symbol}
$\sigma_{\partial M}$ on $T^*\partial M\setminus \pi^*T^*X$ and the
\emph{operator-valued symbol} $\sigma_X$ defined on $T^*X\setminus
0$ and ranging in pseudodifferential operators in the fibers.

The invertibility of both symbols in our case can be proved by a
straightforward computation.

\paragraph{A. Invertibility of the operator-valued symbol
$\sigma_X$.}  We wish to show that the system
\begin{equation}\label{sistema1}
 \left\{\begin{array}{c}
   u_1+v_1+(c_X(\xi)\otimes 1)v_2=g_1 \vspace{2mm}\\
   (c_X(\xi)\otimes 1)v_1-[1\otimes (1-P)](u_2+v_2)=g_2 \\
 \end{array}\right.,
\qquad |\xi|=1,
\end{equation}
is uniquely solvable for all $x\in X$ and $\xi\in S^*_xX$, where
$$
u=(u_1,u_2)\in\im\sigma_X(Q)(\xi),\quad \sigma_X(Q)(\xi)=\frac 1 2 \left(%
\begin{array}{cc}
  1\otimes 1 & c_X(\xi)\otimes 1 \\
  c_X(\xi)\otimes 1 & 1\otimes 1 \\
\end{array}%
\right),
$$
$$
v_1\in (S^+_X\otimes \im(1-P))_x,\quad v_2\in (S^-_X\otimes
L^2(\Omega, S_\Omega\otimes \im(1-p)))_x,
$$
$$
g_1\in (S^+_X\otimes L^2(\Omega,S_\Omega\otimes
\mathbb{C}^n))_x,\quad g_2\in (S^-_X\otimes \im (1-P))_x.
$$
Note that \eqref{sistema1} is obtained from \eqref{sist1} if we
replace operators by their operator-valued symbols. We have also
used the fact that the operator-valued symbol of a family of
pseudodifferential operators in the fibers is equal to the family
itself (we apply this property to $P$) and the operator-valued
symbol of an operator with smooth symbol is equal to the
restriction of the symbol to  $\pi^*T^*X\subset T^*\partial M$ (we
apply this property to $\widetilde{D}_X,\widetilde{D}_X^*$, and
$Q$). Finally, note that we have used the relation $[1\otimes
P,c_X(\xi)\otimes 1]=0$ to obtain \eqref{sistema1} from
\eqref{sist1}.

Let us prove the triviality of the kernel. If
$\sigma_X(Q)(\xi)u=u$, then $u_1=(c_X(\xi)\otimes 1)u_2$, where
$u_2$ is arbitrary. The first equation in \eqref{sistema1} gives
$u_1+(c_X(\xi)\otimes 1 )v_2=-v_1$. By substituting this into the
second equation, we obtain
$$
v_1-(-v_1)=0.
$$
Thus, $v_1=0$ and $u_1+(c_X(\xi)\otimes 1)v_2=0$. Therefore,
$u_1=0$ and $v_2=0$. This shows that the kernel is trivial.

Let us prove the triviality of the cokernel. First,  $g_2$
in~\eqref{sistema1} can be assumed to be zero (by an appropriate
choice of $v_1$). Hence we have to find the solution of the system
$$
\left\{
\begin{array}{c}
  u_1+v_1+(c_X(\xi)\otimes 1)v_2=g\vspace{2mm}\\
  v_1=[1\otimes (1-P)]\Bigl((c_X(\xi)\otimes 1)u_2+(c_X(\xi)\otimes 1)v_2\Bigr).
\end{array}%
\right.
$$
By substituting the second equation into the first equation, we
obtain
$$
u_1+[1\otimes (1-P)](u_1+(c_X(\xi)\otimes 1)v_2)+(c_X(\xi)\otimes
1)v_2=g,
$$
or
$$
[1\otimes (2-P)]\Bigl(u_1+(c_X(\xi)\otimes 1 )v_2\Bigr)=g,
$$
but the operator $2-P$ is invertible (recall that $P$ is a
projection), and $u_1+(c_X(\xi)\otimes 1)v_2$ can be arbitrary.
Thus, the last equation is solvable for any $g$. This shows that
the cokernel is trivial.

\paragraph{B. Invertibility of the principal symbol
$\sigma_{\partial M}$.} Since the dimensions of the bundles where
the symbol acts are equal, it suffices to show that the
corresponding homogeneous equation
\begin{equation}
\label{sistema2} \left\{\begin{array}{c}
  u_1+v_1+(c_X(\xi)\otimes 1)v_2=0,\\
  (c_X(\xi)\otimes 1) v_1-
  \Bigl[1\otimes (1-\sigma_{\partial M}(P)(\xi,\eta))\Bigr](u_2+v_2)=0,\\
\end{array}\right.
\end{equation}
where
$$
u=(u_1,u_2)\in\im\sigma_{\partial M}(Q)(\xi,\eta),$$
$$
\sigma_{\partial M}(Q)(\xi,\eta)=\frac 1 2 \left(%
\begin{array}{cc}
  1\otimes (1+c_\Omega(\eta)) & c_X(\xi)\otimes 1 \vspace{1mm}\\
  c_X(\xi)\otimes 1 & 1\otimes (1- c_\Omega(\eta)) \\
\end{array}%
\right),
$$
$$
v_1\in (\pi^*S^+_X\otimes \im(1-\sigma_{\partial M}(P)))_m,\quad
v_2\in (\pi^*S^-_X\otimes S_\Omega\otimes \im(1-p))_m,
$$
$$
g_1\in (S^+_X\otimes S_\Omega\otimes \mathbb{C}^n))_m,\quad g_2\in
(S^+_X\otimes \im (1-\sigma_{\partial M}(P)))_m,
$$
has only the trivial solution if $m\in \partial M,$ $(\xi,\eta)\in
T^*_m\partial M$, $|\xi|^2+|\eta|^2=1$, and $\eta\ne 0$.

Indeed, since $\sigma_{\partial M}(Q)(\xi,\eta) u=u$, we obtain
$$
(1\otimes (1+ c_\Omega(\eta)))u_1+(c_X(\xi)\otimes 1)u_2=2u_1;
$$
i.e., $(c_X(\xi)\otimes 1)u_2= (1\otimes (1-c_\Omega(\eta)))u_1$.

A computation shows that the principal symbol of the fiberwise
operator $P$ is
$$
\sigma_{\partial M}(P)(\xi,\eta)=p\sigma(\Pi_+)(\eta).
$$
(See the compatibility condition in the definition of $\mathcal{A}$
in \eqref{algebra1}.)

Let us consider two possibilities.

\textbf{1. $\xi\ne 0$.} By substituting the expression for
$\sigma_{\partial M}(P)$ into \eqref{sistema2}, we obtain
\begin{equation}\label{sistema3}
    \left\{\begin{array}{c}
      u_1+v_1+(c_X(\xi)\otimes 1)v_2=0, \vspace{2mm}\\
      |\xi|^2v_1-\Bigl[1\otimes (1-p\sigma(\Pi_+)(\eta))\Bigr]
       \Bigl((1\otimes (1-c_\Omega(\eta)))u_1+(c_X(\xi)\otimes 1)v_2\Bigr)=0. \\
    \end{array}\right.
\end{equation}
The second equation of the system is equivalent to
$$
v_1(1+|\xi|^2)+\Bigl[1\otimes(1-\sigma(\Pi_+)(\eta))\Bigr](1\otimes
c_\Omega(\eta)) u_1=0.
$$
It is now straightforward to prove the desired triviality of the
solution by using the last equation together with the first
equation in \eqref{sistema3}. To this end, we decompose all vectors
in the system  along the ranges of the orthogonal projections
$$
1\otimes p\sigma(\Pi_+)(\eta),\quad 1\otimes
p(1-\sigma(\Pi_+)(\eta)) ,\quad 1\otimes  (1-p)
$$
and match the corresponding components.

\textbf{2. $\xi=0$.} In this case, \eqref{sistema2} is reduced to
\begin{equation}\label{sistema4}
u_1+v_1=0,\quad\Bigl[1\otimes
(1-p\sigma(\Pi_+)(\eta))\Bigr](u_2+v_2)=0.
\end{equation}
In addition, the equation $\sigma_{\partial M}(Q)(0,\eta)u=u$
implies that $u_1\in p\im\sigma(\Pi_+)(\eta)$ and $u_2\in p\ker
\sigma(\Pi_+)(\eta)$. Using this fact and \eqref{sistema4}, it is
straightforward to show that $u_1=0$, $v_1=0$, $u_2=0$, $v_2=0$.

The proof of Proposition~\ref{lem1} is complete.
\end{proof}

\begin{lemma}\label{lem2}
If  $\mathcal{P}=(0,P)$ is  a projection, then the operators
$$
D_M\otimes 1_{\im\mathcal{P}} \qquad \text{and}\qquad D^*_X\otimes
1_{\im P}
$$
are stably homotopic. In particular,
$$
\ind D_M\otimes 1_{\mathcal{P}}=-\ind (D_X\otimes 1_{\im P}).
$$
\end{lemma}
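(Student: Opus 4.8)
The plan is to make the case $\mathcal{P}=(0,P)$ completely explicit, split the resulting operator into an invertible summand and a copy of $D_X^*\otimes 1_{\im P}$, and conclude by homotopy invariance of the index. First, for $p=0$ the operator $D_M\otimes 1_{\im p}$ acts on the zero bundle, so in~\eqref{mainbvp} one has $u=0$ and $f=0$; the summands $pH^s$, $pH^{s-1}$ in~\eqref{trio1} collapse and all the Laplace-type terms $\Delta^{s/2-1/4}_{\partial M}u_j$ disappear. Hence $D_M\otimes 1_{\im\mathcal{P}}$ is literally the operator
\[
\mathcal{B}\colon(v_1,v_2)\longmapsto\bigl(v_1+\widetilde D_X^*v_2,\ (1\otimes(1-P))(\widetilde D_Xv_1-v_2)\bigr)
\]
on $\partial M$, from $(1-P)L^2(\partial M,\pi^*S^+_X\otimes S_\Omega\otimes\mathbb{C}^n)\oplus L^2(\partial M,\pi^*S^-_X\otimes S_\Omega\otimes\mathbb{C}^n)$ to $L^2(\partial M,\pi^*S^+_X\otimes S_\Omega\otimes\mathbb{C}^n)\oplus(1-P)L^2(\partial M,\pi^*S^-_X\otimes S_\Omega\otimes\mathbb{C}^n)$. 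I would also record that the compatibility relation $\sigma(P)=\sigma(p)|_{\partial M}=0$ forces $P=\Pi_+\widetilde P\Pi_+$ with $\widetilde P$ of negative order, so that each $P_x$ is a compact projection and $\im P\in\Vect(X)$ is a finite-dimensional vector bundle; this is exactly what makes $D_X^*\otimes 1_{\im P}$ meaningful.

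Next I would reduce to the case in which $\widetilde D_X$ (hence $\widetilde D_X^*$) commutes with $P$. Since the principal symbol of $\widetilde D_X$ is $c_X(\xi)\otimes 1_{S_\Omega\otimes\mathbb{C}^n}$ and $P$ acts only in the fibre factor, the commutator $[\widetilde D_X,1\otimes P]$ is compact, and removing the off-diagonal part $P\widetilde D_X(1-P)+(1-P)\widetilde D_XP$ along a straight-line homotopy leaves the principal symbol --- hence the Fredholm property of $\mathcal{B}$ --- untouched. Decomposing every $L^2(\partial M,\pi^*S^\pm_X\otimes S_\Omega\otimes\mathbb{C}^n)$ as $(1-P)L^2\oplus PL^2$ and identifying $PL^2(\partial M,\pi^*S^\pm_X\otimes S_\Omega\otimes\mathbb{C}^n)$ with $L^2(X,S^\pm_X\otimes\im P)$, one checks that $\mathcal{B}$ becomes block-diagonal,
\[
\mathcal{B}\ \cong\ \begin{pmatrix}1&\widetilde D_X^*\\ \widetilde D_X&-1\end{pmatrix}\ \oplus\ \Bigl(\widetilde D_X^*\colon L^2(X,S^-_X\otimes\im P)\longrightarrow L^2(X,S^+_X\otimes\im P)\Bigr),
\]
where the first block is an endomorphism of $(1-P)L^2(\partial M,\pi^*S^+_X\otimes\cdots)\oplus(1-P)L^2(\partial M,\pi^*S^-_X\otimes\cdots)$.

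Finally I would handle the two summands separately. The first block is self-adjoint, and its square acts as $1+\widetilde D_X^*\widetilde D_X$ on the first factor and as $1+\widetilde D_X\widetilde D_X^*$ on the second, hence is bounded below by $1$; the block is therefore invertible and may be deleted without changing the stable homotopy class. The second block is a zeroth-order elliptic operator on $X$ twisted by $\im P$, whose principal symbol $c_X(\xi)\otimes 1_{\im P}$ is a zeroth-order realization of that of $D_X^*\otimes 1_{\im P}$; hence the two are stably homotopic. Putting this together, $D_M\otimes 1_{\im\mathcal{P}}=\mathcal{B}$ is stably homotopic to $D_X^*\otimes 1_{\im P}$, and since the index is a stable-homotopy invariant, $\ind(D_M\otimes 1_{\im\mathcal{P}})=\ind(D_X^*\otimes 1_{\im P})=-\ind(D_X\otimes 1_{\im P})$.

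I expect the crux to be the block decomposition: once $[\widetilde D_X,P]=0$ has been arranged it is bookkeeping of the tensor factors $S^\pm_X$, $S_\Omega$, $\mathbb{C}^n$ and of the two projections, but the decisive observation --- without which one obtains only the index identity and not the stable homotopy --- is that the $(1-P)$-block is not merely Fredholm but genuinely invertible.
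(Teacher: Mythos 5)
Your proof is correct and follows essentially the same route as the paper: write out the $p=0$ case, split the boundary operator along the $P$ versus $1-P$ decomposition using compactness of $[\widetilde D_X, 1\otimes P]$, identify the $P$-block with a zeroth-order realization of $D_X^*\otimes 1_{\im P}$, and dispose of the $(1-P)$-block. The one place you diverge is in disposing of that block: the paper simply scales $\widetilde D_X, \widetilde D_X^*$ to zero along a linear homotopy (yielding $1\oplus(-1)$, which is manifestly trivial), whereas you argue genuine invertibility via self-adjointness and the lower bound $\mathcal{B}_1^2=(1+\widetilde D_X^*\widetilde D_X)\oplus(1+\widetilde D_X\widetilde D_X^*)\ge 1$. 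This is a valid variant but slightly overstates what is needed and relies on the (harmless, but unstated in the paper) normalization $\widetilde D_X^*=(\widetilde D_X)^*$; the $\varepsilon$-scaling homotopy works without any such choice and already yields the claimed stable homotopy, not just the index identity.
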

\begin{remark}
In this lemma, the range of the projection $P$ is the space of
sections of a finite-dimensional vector bundle over $X$. Thus, we
can use this bundle to twist the Dirac operator on $X$, and the
index is well defined.
\end{remark}

\begin{proof}
Since the first component of $\mathcal{P}$ is zero, we see that the
boundary value problem is reduced to the operator
\begin{multline*}
\left(%
\begin{array}{cc}
  1 & {{\widetilde{D}}_X^*} \\
  (1\otimes (1-P)) {\widetilde{D}_X} & -1\otimes (1-P) \\
\end{array}%
\right)
\colon \\
\begin{array}{c}
  C^\infty(X,S^+_X\otimes\im (1-P)) \\
  \oplus \\
  C^\infty(X,S^-_X\otimes C^\infty(\Omega, S_\Omega\otimes \mathbb{C}^n)) \\
\end{array}
\longrightarrow
\begin{array}{c}
  C^\infty(X,S^+_X\otimes C^\infty(\Omega, S_\Omega\otimes \mathbb{C}^n)) \\
  \oplus \\
  C^\infty(X,S^-_X\otimes\im (1-P)) \\
\end{array}
\end{multline*}
on the boundary. Modulo compact operators, this operator is just
the direct sum of
\begin{multline}
\left(%
\begin{array}{cc}
  1 &  [1\otimes (1-P)]{\widetilde{D}_X^*} \\
  (1\otimes (1-P)) {\widetilde{D}_X} & -1 \\
\end{array}%
\right)\colon \\ \label{fst1}
\begin{array}{c}
  C^\infty(X,S^+_X\otimes\im (1-P)) \\
  \oplus \\
  C^\infty(X,S^-_X\otimes\im (1-P)) \\
\end{array}
\longrightarrow
\begin{array}{c}
  C^\infty(X,S^+_X\otimes\im (1-P)) \\
  \oplus \\
  C^\infty(X,S^-_X\otimes\im (1-P)) \\
\end{array}
\end{multline}
and
\begin{equation}\label{snd}
\left(%
\begin{array}{cc}
  0 & (1\otimes P){\widetilde{D}_X^*} \\
  0 & 0 \\
\end{array}%
\right)\colon
\begin{array}{c}
  0 \\
  \oplus \\
  C^\infty(X,S^-_X\otimes \im P) \\
\end{array}
\longrightarrow
\begin{array}{c}
  C^\infty(X,S^+_X\otimes \im P) \\
  \oplus \\
  0. \\
\end{array}
\end{equation}
Here we have used the compactness of the commutators $[1\otimes
P,\widetilde{D}_X]$ and $[1\otimes P,\widetilde{D}^*_X]$. (The
commutators of the corresponding principal and operator-valued
symbols in the sense of \cite{SaSt11} are zero.)

The operator \eqref{fst1} is homotopic to $1\oplus(-1)$ in the
class of elliptic operators. (To define the homotopy, we multiply
$\widetilde{D}^*_X$ and $\widetilde{D}_X$ by a parameter
$\varepsilon$ varying from $1$ down to $0$.) The principal symbol
of \eqref{snd} coincides on the cosphere bundle with the principal
symbol of the Dirac operator $D^*_X$ on $X$ twisted by the
finite-dimensional vector bundle $\im P$.

The proof of Lemma~\ref{lem2} is complete.
\end{proof}

\paragraph{3. Classes in analytic $K$-homology of manifolds with edges.}

We now assign a class in the analytic $K$-homology of the algebra
$C(\mathcal{M})$ of continuous functions on $\mathcal{M}$ to the
boundary value problem \eqref{trio1}.

Suppose  that the Sobolev exponent  $s$ is an integer $\ge 1$.
First, we reduce \eqref{trio1} to a boundary value problem in $L^2$
spaces. To this end, consider  the composition of \eqref{trio1}
with appropriate powers of order reduction operators
\begin{equation}
\label{dplus}
 T\colon H^s(M,E)\longrightarrow H^{s-1}(M,E),\qquad s\ge 1,
 \quad E\in\Vect(M).
\end{equation}
(An explicit construction of an elliptic operator \eqref{dplus}
that has index zero and does not require boundary conditions is
given, e.g., in \cite[Prop.~20.3.1]{Hor3}.)

This composition is given by the Fredholm operator
$$
\widetilde{D_M\otimes 1_{\im \mathcal{P}}}\colon
\begin{array}{c}
  pL^2(M,S^+_M\otimes\mathbb{C}^n) \\
  \oplus \\
  (1-P)L^2(\partial M,\pi^*S^+_X\otimes S_\Omega\otimes \mathbb{C}^n) \\
  \oplus \\
  (1-p)L^2(\partial M,\pi^*S^-_X\otimes S_\Omega\otimes \mathbb{C}^n ) \\
\end{array}
\longrightarrow
\begin{array}{c}
  pL^2(M,S^-_M\otimes\mathbb{C}^n) \\
  \oplus \\
  L^2(\partial M,\pi^*S^+_X\otimes S_\Omega\otimes \mathbb{C}^n) \\
  \oplus \\
  (1-P)L^2(\partial M,\pi^*S^-_X\otimes S_\Omega\otimes \mathbb{C}^n ) \\
\end{array}
$$
where
$$
\widetilde{D_M\otimes 1_{\im \mathcal{P}}}=T^{s-1} (D_M\otimes
1_{\im \mathcal{P}}) T^{-s}.
$$
This bounded Fredholm operator acts in subspaces of $L^2$ spaces,
which can be equipped with $C(\mathcal{M})$-module structures. (A
function $f\in C(\mathcal{M})$ acts in $L^2$-spaces on $M$ and
$\partial M$ as the multiplication by $f$ and $f|_{\partial M}$,
respectively.) Moreover, the operator commutes with the
$C(\mathcal{M})$-module structure modulo compact operators. (This
is clear for smooth functions and follows by continuity for
continuous functions.)

Thus, our operator is an abstract elliptic operator in the sense of
Atiyah on $\mathcal{M}$ \cite{Ati4}. Hence a standard construction
(e.g., see~\cite{NaSaSt3}) defines the corresponding class in
analytic $K$-homology of $\mathcal{M}$, which we denote by
\begin{equation}\label{klass}
[D_M\otimes 1_{\im \mathcal{P}}]\in K^0(C(\mathcal{M}))\equiv
K_0(\mathcal{M}).
\end{equation}

\section{Poincar\'e isomorphism}

\paragraph{1. Main theorem.}

\begin{theorem}\label{theo1}
The mapping
\begin{equation}
\label{poi1}
\begin{array}{ccc}
  \tau_\mathcal{M}\colon K_0(\mathcal{A})&\longrightarrow &K^0(C(\mathcal{M})),\vspace{3mm}\\
   \mathcal{P} &\longmapsto &[D_M\otimes 1_{\im\mathcal{P}}], \\
\end{array}
\end{equation}
is an isomorphism.
\end{theorem}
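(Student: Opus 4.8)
The plan is to deduce the theorem from a five-lemma comparison of two six-term exact sequences: the sequence in operator $K$-theory attached to a natural ideal in $\overline{\mathcal{A}}$, and the sequence in $K$-homology attached to the ``edge'' ideal in $C(\mathcal{M})$, the comparison maps at the outer terms being the classical Poincar\'e (and Poincar\'e--Lefschetz) dualities of the even-dimensional $\spin^c$ manifolds $X$ and $M$. First I would check that $\tau_\mathcal{M}$ is well defined. By Proposition~\ref{lem1} a class \eqref{klass} is attached to every projection $\mathcal{P}\in\Mat(n,\mathcal{A})$; I would verify that it is independent of the auxiliary choices in \eqref{mainbvp} and \eqref{dplus} --- the exponent $s$, the order reductions $T$, the Laplace-type operators $\Delta_{\partial M}$, and the operators $\widetilde{D}_X$ with the prescribed symbol \eqref{volna1} --- because any two such choices are joined by a norm-continuous path of abstract elliptic operators in the sense of Atiyah \cite{Ati4} whose terms differ by operators that become compact after the $L^2$-reduction, so the $K$-homology class is unchanged. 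Direct sums of projections give direct sums of boundary value problems, so the assignment is additive; being additive and homotopy invariant on projections, it descends to a homomorphism $\tau_\mathcal{M}\colon K_0(\mathcal{A})\to K^0(C(\mathcal{M}))$, and since $\mathcal{A}$ is a local $C^*$-subalgebra of $\overline{\mathcal{A}}$ we may work with $K_0(\overline{\mathcal{A}})=K_0(\mathcal{A})$.

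Next I would set up the two exact sequences. Inside $\overline{\mathcal{A}}$ take the ideal $\overline{\mathcal{J}}$ of elements $(0,F)$; since $\sigma(F)=0$ forces $F$ to be a smoothing family of Toeplitz operators, $\overline{\mathcal{J}}$ is the algebra $C(X,\mathcal{K})$ of continuous families of compact operators on the Hilbert subbundle $\im\Pi_+(D_\Omega)\to X$, while $\overline{\mathcal{A}}/\overline{\mathcal{J}}\cong C(M)$ via $(f,F)\mapsto f$. This is where Assumption~\ref{triva} is used: it makes $D_\Omega$ invertible, so $\im\Pi_+(D_\Omega)$ is a genuine (infinite-dimensional) Hilbert bundle, trivial by Kuiper's theorem, whence $K_*(\overline{\mathcal{J}})\cong K^*(X)$. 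The resulting six-term sequence reads
$$
\cdots\longrightarrow K^0(X)\xrightarrow{\,i_*\,} K_0(\overline{\mathcal{A}})\xrightarrow{\,q_*\,} K^0(M)\xrightarrow{\,\partial_{\mathcal{A}}\,}K^1(X)\longrightarrow\cdots.
$$
On the other side, restriction to the edge $g\mapsto g|_X$ produces the extension $0\to C_0(M\setminus\partial M)\to C(\mathcal{M})\to C(X)\to 0$ and the six-term sequence in $K$-homology
$$
\cdots\longrightarrow K_0(X)\longrightarrow K_0(\mathcal{M})\longrightarrow K_0(M,\partial M)\xrightarrow{\,\partial_C\,} K_1(X)\longrightarrow\cdots,
$$
in which $K_j(M,\partial M)=K^j(C_0(M\setminus\partial M))$ is the relative $K$-homology of the pair.

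Then I would define vertical comparison maps: on the $X$-terms the classical Poincar\'e isomorphisms $K^j(X)\to K_j(X)$ of \cite{BaDo1,BaDo4,Kam1}; on the $M$-terms the Poincar\'e--Lefschetz isomorphisms $K^j(M)\to K_j(M,\partial M)$, $[p]\mapsto[D_M\otimes 1_{\im p}]$ taken as a relative class (the Dirac operator with no boundary condition); and $\tau_\mathcal{M}$ in the middle, and I would verify commutativity of the four squares surrounding $K_0(\overline{\mathcal{A}})$. The square near the ideal terms is exactly Lemma~\ref{lem2}: for $\mathcal{P}=(0,P)$ the operator $D_M\otimes 1_{\im\mathcal{P}}$ is stably homotopic to $D_X^*\otimes 1_{\im P}$, so its class on $\mathcal{M}$ is, up to sign, the pushforward along the edge inclusion $X\hookrightarrow\mathcal{M}$ of the Poincar\'e dual of $\im P$. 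The square near the quotient terms $K^0(M)$ is a localization statement: the restriction map $K_0(\mathcal{M})\to K_0(M,\partial M)$ forgets the $C(\mathcal{M})$-action on the boundary summands of the module, and the resulting relative class of $D_M\otimes 1_{\im\mathcal{P}}$ does not depend on the boundary condition, hence equals $[D_M\otimes 1_{\im p}]$. The two remaining squares compare connecting homomorphisms: by the standard exponential-lift formula, $\partial_{\mathcal{A}}\colon K^0(M)\to K^1(X)$ is the family index $[p]\mapsto\ind\bigl(D_\Omega\otimes 1_{p|_{\partial M}}\bigr)$ of the fiberwise Dirac operators twisted by $p|_{\partial M}$ (equivalently, the Gysin map of $\pi\colon\partial M\to X$ precomposed with restriction to $\partial M$), and this must be matched with the \emph{$K$-homology} boundary map $\partial_C$ under the two dualities. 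Granting the four squares, the five lemma --- the four neighbouring verticals being isomorphisms by the classical dualities --- shows that $\tau_\mathcal{M}$ is an isomorphism.

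The main obstacle will be the commutativity of the two connecting-homomorphism squares: one must identify an \emph{analytic} family-index (Gysin) map on topological $K$-theory with a boundary map in $K$-homology through Poincar\'e duality, which comes down to the compatibility of the $\spin^c$ orientations of $X$, $\partial M$, and the fibers $\Omega$ with the Atiyah--Singer family index theorem. If the direct diagram chase proves cumbersome, a cleaner route would be to realize $\tau_\mathcal{M}$ as the Kasparov product with a single fundamental class in $KK(\mathcal{A}\otimes C(\mathcal{M}),\mathbb{C})$ built from the boundary value problem \eqref{mainbvp}; naturality of the Kasparov product then forces all squares to commute at once, the remaining work being the construction of that class and the verification of its restrictions to the edge and to the interior. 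A subsidiary technical point is the identification $K_*(\overline{\mathcal{J}})\cong K^*(X)$, in which Assumption~\ref{triva} is essential.
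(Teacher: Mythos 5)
Your proposal follows essentially the same route as the paper: compare the six-term exact sequence in operator $K$-theory coming from the ideal $C(X,\mathcal{K})\subset\overline{\mathcal{A}}$ with the $K$-homology six-term sequence coming from the edge ideal $C_0(\mathcal{M}\setminus X)\subset C(\mathcal{M})$, use the smooth Poincar\'e and Poincar\'e--Lefschetz dualities of $X$ and $M$ as the outer vertical maps, and conclude by the five lemma. The two squares you call easy are exactly the ones the paper dispatches in a line --- Lemma~\ref{lem2} for the square adjacent to the ideal term, and the observation that both horizontal maps are forgetful for the square adjacent to the quotient term.

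The genuine gap is the step you yourself flag as the ``main obstacle'': commutativity of the square comparing the $K$-theory connecting map $\delta_0\colon[E]\mapsto\ind(D_\Omega\otimes 1_{E|_{\partial M}})$ with the $K$-homology boundary map through the two dualities. You correctly reduce it to the identity
$$
\pi_*\bigl[D_{\partial M}\otimes 1_{E|_{\partial M}}\bigr]=[D_X]\cdot\ind\bigl(D_\Omega\otimes 1_{E|_{\partial M}}\bigr)\in K^1(C(X)),
$$
but you do not establish it, only gesture at ``compatibility of $\spin^c$ orientations with the family index theorem.'' The paper proves it as a separate lemma: regard $\Delta_{\partial M}^{-1/2}(D_{\partial M}\otimes 1_{E|_{\partial M}})$ as a zero-order pseudodifferential operator over $X$ with operator-valued symbol of compact fiber variation in the sense of Luke, apply the generalized Luke index formula of~\cite{NaSaSt3} to obtain $\pi_*[\mathcal{D}]=Q[\ind\sigma_L(\mathcal{D})]$, and then use that the Luke symbol factors (up to an invertible factor) as the exterior product $\sigma(D_X)\#(D_\Omega\otimes 1_E)$ together with multiplicativity of the index. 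Your alternative of realizing $\tau_\mathcal{M}$ as a Kasparov product with a single fundamental class so that naturality comes for free is a clean idea, but proving that the class has the right restrictions to the edge and to the interior is essentially a repackaging of the same Luke-type computation rather than a way around it. Until one of the two is actually carried out, the five-lemma argument is not closed.
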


\begin{proof}

0. It is easy to verify that the mapping  $\mathcal{P} \mapsto
[D_M\otimes 1_{\im\mathcal{P}}]$ induces a well-defined
homomorphism of the $K$-group into the $K$-homology group. (Indeed,
zero projections are taken to trivial operators by
Lemma~\ref{lem2}, and homotopic projections are taken to homotopic
operators.)

1. On the one hand, the exact sequence
$$
\begin{array}{ccccccc}
  0\to &C(X,\mathcal{K}) &\longrightarrow &\overline{\mathcal{A}}
  &\longrightarrow&
C(M)&\to 0\\
  & F & \longmapsto & (0,F) &  &  &  \\
  &   &  & (f,F) & \longmapsto & f &  \\
\end{array}
$$
of $C^*$-algebras, where the ideal consists of compact Toeplitz
symbols and the quotient consists of continuous functions on $M$,
induces the exact sequence
\begin{equation}
\label{easy1} \ldots  K_1(C(M))\stackrel{\delta_1}\to
K_0(C(X,\mathcal{K}))\to K_0(\overline{\mathcal{A}})\to
K_0(C(M))\stackrel{\delta_0}\to K_1(C(X,\mathcal{K}))\ldots
\end{equation}
in $K$-theory. By definition, the boundary mapping $\delta_0$ takes
the class of each vector bundle  $E$ on $M$ (the range of a
projection over $C(M)$) to the index
$$
\ind (D_\Omega\otimes 1_{E|_{\partial M}})\in K^1(X)\simeq
K_1(C(X,\mathcal{K}))
$$
of the family of Dirac operators in the fibers of $\pi$ twisted by
the restriction of  $E$ to $\partial M$.

2.  On the other hand, the short exact sequence
$$
0\to C_0(\mathcal{M}\setminus X)\longrightarrow
C(\mathcal{M})\longrightarrow C(X)\to 0
$$
of function algebras induces the sequence
\begin{equation}
\label{easy2} \dotsm  K^1(C_0(\mathcal{M}\setminus X))\to
K^0(C(X))\to K^0(C(\mathcal{M}))\to K^0(C_0(\mathcal{M}\setminus
X))\to K^1(C(X))\dotsm
\end{equation}
in  $K$-homology.

3. We combine the sequences \eqref{easy1} and \eqref{easy2} in the
diagram
\begin{equation}
\label{biga1}
\begin{array}{ccccccccc}
K_1(C(M))
  & \to & K_0(C(X,\mathcal{K})) & \to & K_0(\overline{\mathcal{A}}) & \to &
    K_0(C(M)) & \to & \!\!\!\!K_1(C(X,\mathcal{K}))\!\!\!\! \\
  \tau_M\downarrow &  & \downarrow\tau_X &  & \downarrow\tau_{\mathcal{M}} &  & \downarrow\tau_M &  & \downarrow\tau_X  \\
  \!\! K^1(C_0(\mathcal{M}\!\!\setminus\!\! X))\!\! & \to & K^0(C(X)) & \to & K^0(C(\mathcal{M})) &
  \to & \!\! K^0(C_0(\mathcal{M}\!\!\setminus\!\! X))\!\! & \to & K^1(C(X)) \\
\end{array}
\end{equation}
Here $\tau_X$ and $\tau_{M}$ are the Poincar\'e isomorphisms on $X$
and $M$. (Poincar\'e isomorphisms on smooth manifolds and manifolds
with boundary are considered, e.g., in~\cite{Kas2,MePi2}.) More
precisely, the mappings of the even $K$-groups are defined in terms
of Dirac operators twisted by corresponding vector bundles. The
definition of $\tau_X$ and $\tau_{M}$ on the odd $K$-groups can be
obtained by a suspension argument.

4. Let us show that the diagram \eqref{biga1} commutes neglecting
sign.

First, let us prove that the square
\begin{equation}
\label{sq1}
\begin{array}{ccc}
K_0(C(M))
  & \stackrel{\delta_0}\to & K_1(C(X,\mathcal{K}))  \\
  \tau_M\downarrow &  & \downarrow\tau_X   \\
  K^0(C_0(\mathcal{M}\setminus X)) & \to & K^1(C(X)) \\
\end{array}
\end{equation}
commutes. Indeed, let $E$ be a vector bundle on $M$; then the
composition of mappings passing through the top right corner
in~\eqref{sq1} takes the class $[E]\in K_0(C(M))$ to
$$
[D_X]\cdot \ind (D_\Omega\otimes 1_{E|_{\partial M}})\in K^1(C(X)),
$$
i.e., to the class of the Dirac operator on $X$ twisted by $\ind
(D_\Omega\otimes 1_{E|_{\partial M}})\in K_1(C(X))$. If we apply
the composition of mappings passing through the bottom left corner
of the square to $[E]$, then we obtain the element
$$
\pi_* [D_{\partial M}\otimes 1_{E|_{\partial M}}]\in K^1(C(X)).
$$
\begin{lemma}
One has
\begin{equation}
\label{tensor1} \pi_* [D_{\partial M}\otimes 1_{E|_{\partial
M}}]=[D_X]\cdot \ind (D_\Omega\otimes 1_{E|_{\partial M}}) \in
K^1(C(X)).
\end{equation}
\end{lemma}

\begin{proof}[Proof \rm (see~\cite{NaSaSt3})]
Consider the zero-order pseudodifferential operator
$\mathcal{D}=\Delta_{\partial M}^{-1/2}\circ(D_{\partial M}\otimes
1_{E|_{\partial M}})$ on the total space  $\partial M$ of $\pi$ as
a pseudodifferential operator on $X$ with operator-valued symbol in
the sense of~\cite{Luk1}. Then by the generalized Luke formula
\cite{NaSaSt3} we obtain
$$
\pi_*[\mathcal{D}]=Q[\ind \sigma_L(\mathcal{D})]\in K^1(C(X)),
$$
where $\sigma_L(\mathcal{D})$ is a symbol of compact fiber
variation on $T^*X$, $\ind \sigma_L(\mathcal{D}) \in
K_1(C_0(T^*X))$ is its index, and
$$
Q\colon K_1(C_0(T^*X))\longrightarrow K^1(C(X))
$$
is the mapping, induced by quantization, that takes the class of
each elliptic symbol to the class of the corresponding operator in
$K$-homology.

The symbol of $\mathcal{D}$ in the sense of \cite{Luk1} is (up to
the invertible factor $(\xi^2+\Delta_\Omega)^{1/2}$ defined by the
Laplacian $\Delta_{\partial M}$) the exterior tensor product
$$
\sigma(D_X)(x,\xi)\#(D_{\Omega_x}\otimes 1_{E|_{\Omega_x}}).
$$
Hence we obtain
$$
\ind \sigma_L(\mathcal{D})=[\sigma(D_X)]\ind(D_{\Omega}\otimes
1_{E|_{\partial M}})
$$
by the multiplicative property of the index. An application of $Q$
gives the desired relation \eqref{tensor1}.

The proof of the lemma is complete.
\end{proof}

Thus, the square~\eqref{sq1} commutes. By the suspension argument,
one proves that the same is true of the leftmost square
in~\eqref{biga1}.

Let us prove the commutativity of the square
\begin{equation}
\label{sq2}
\begin{array}{ccc}
K_0(C(X))
  & \to & K_0(\overline{\mathcal{A}})  \\
  \tau_X\downarrow &  & \downarrow\tau_\mathcal{M}   \\
  K^0(C(X)) & \to & K^0(C(\mathcal{M})). \\
\end{array}
\end{equation}
Let $[P]\in K_0(C(X))$ be the class of a vector bundle $\im P$ on
$X$. The composition of mappings passing through the top left
corner of~\eqref{sq2} takes $[P]$ to the $K$-homology class of the
operator
$$
D_X\otimes 1_{\im P}.
$$
On the other hand,  the composition of mappings in the opposite
order takes this element (by Lemma~\ref{lem2}) to the $K$-homology
class of the operator
$$
D_X^*\otimes 1_{\im P}.
$$
It is clear that these two classes are equal neglecting sign.

The square
\begin{equation}
\label{sq3}
\begin{array}{ccc}
K_0(\overline{\mathcal{A}})
  & \to & K_0(C(M))  \\
  \tau_{\mathcal{M}}\downarrow &  & \downarrow\tau_{M}   \\
  K^0(C(\mathcal{M})) & \to & K^0(C_0(\mathcal{M}\setminus X)) \\
\end{array}
\end{equation}
commutes for obvious reasons. (The horizontal mappings are
forgetful mappings.)

Thus, diagram \eqref{biga1} commutes (neglecting sign).

5. By applying the five lemma to diagram \eqref{biga1}, we complete
the proof of Theorem~\ref{theo1}.

\end{proof}

\paragraph{2. Obstruction to Fredholm realizations of Dirac operators
on manifolds with edges.} Let $D_M$ be a Dirac operator on $M$
associated with some $\spin^c$-structure on $M$ that induces
$\spin^c$-structures on the base and fibers of $\pi\colon \partial
M\longrightarrow X$. Let $D_X$ and $D_\Omega$ be the Dirac operator
on the base and the family of Dirac operators on the fibers,
respectively.

The operator $D_M$ defines a class
$$
[D_M]\in K^{\dim M}(C_0(\mathcal{M}\setminus X))
$$
in $K$-homology. Consider the inclusion $j\colon
C_0(\mathcal{M}\setminus X)\to C(\mathcal{M})$.
\begin{proposition}
\label{obst1} The element  $[D_M]\in K^{\dim
M}(C_0(\mathcal{M}\setminus X))$ is in the range of the mapping
$$
j^*\colon K^{*}(C(\mathcal{M}))\longrightarrow
K^*(C_0(\mathcal{M}\setminus X))
$$
of analytic $K$-homology groups if and only if Assumption
~\ref{triva} is satisfied, i.e., if and only if
$$
\ind D_\Omega=0.
$$
\end{proposition}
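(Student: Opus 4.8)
The plan is to reduce the statement to the vanishing of a single connecting homomorphism, and then to compute that homomorphism explicitly. Start from the short exact sequence $0\to C_0(\mathcal M\setminus X)\to C(\mathcal M)\to C(X)\to 0$ inducing \eqref{easy2}. Since $\dim M$ is even, the relevant fragment of \eqref{easy2} reads
$$
K^0(C(\mathcal M))\xrightarrow{\ j^*\ }K^0(C_0(\mathcal M\setminus X))\xrightarrow{\ \partial\ }K^1(C(X)),
$$
so by exactness $[D_M]$ lies in the range of $j^*$ if and only if $\partial[D_M]=0$ in $K^1(C(X))$. This reduction is unconditional: the sequence \eqref{easy2} does not use Assumption~\ref{triva}. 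Hence it remains to compute $\partial[D_M]$.

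The second and main step is to show that $\partial[D_M]=\pm\,\pi_*[D_{\partial M}]$ in $K^1(C(X))$, where $[D_{\partial M}]\in K_1(\partial M)=K^1(C(\partial M))$ is the $K$-homology class of the self-adjoint boundary Dirac operator and $\pi_*$ is induced by $\pi^*\colon C(X)\to C(\partial M)$. For this I would use the collapsing map $c\colon M\to\mathcal M$, which is the identity on $M\setminus\partial M$ and equals $\pi$ on $\partial M$; it is proper and restricts to a homeomorphism $M\setminus\partial M\to\mathcal M\setminus X$. Pullback along $c$ is a morphism of short exact sequences from $0\to C_0(\mathcal M\setminus X)\to C(\mathcal M)\to C(X)\to 0$ to $0\to C_0(M\setminus\partial M)\to C(M)\to C(\partial M)\to 0$, which is an isomorphism on the ideals and is $\pi^*$ on the quotients; naturality of the connecting homomorphism then gives $\partial[D_M]=\pi_*\bigl(\partial_M[D_M]\bigr)$, where $\partial_M$ is the connecting map of the pair $(M,\partial M)$. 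Finally, $\partial_M[D_M]=\pm[D_{\partial M}]$: this is the standard fact that the Poincar\'e--Lefschetz connecting map carries the relative fundamental class of an even-dimensional $\spin^c$ manifold with boundary to the fundamental class of the boundary. (It follows from naturality of Poincar\'e--Lefschetz duality on $M$, cf.\ \cite{Kas2,MePi2}, via the commuting square relating $\tau_M$, $\tau_{\partial M}$, restriction of bundles and $\partial_M$, together with $\tau_M[\mathbf 1_M]=[D_M]$ and $\tau_{\partial M}[\mathbf 1_{\partial M}]=[D_{\partial M}]$; alternatively it is an excision computation from the product structure $D_M\simeq\partial_t+D_{\partial M}$ near $\partial M$.)

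To conclude, I would invoke \eqref{tensor1} with $E$ the trivial line bundle, which gives $\pi_*[D_{\partial M}]=[D_X]\cdot\ind D_\Omega\in K^1(C(X))$. Since $[D_X]\cdot(-)$ is cap product with the $K$-homology fundamental class of the even-dimensional $\spin^c$ manifold $X$ — that is, the odd Poincar\'e isomorphism $\tau_X$, which takes a $K$-theory class to the $K$-homology class of $D_X$ twisted by it — it is in particular injective. Hence $\partial[D_M]=0$ if and only if $[D_X]\cdot\ind D_\Omega=0$, i.e.\ if and only if $\ind D_\Omega=0$, i.e.\ if and only if Assumption~\ref{triva} holds; together with the first step this proves the proposition. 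The delicate point is the second step: one must match carefully the analytic definition of the relative $K$-homology class of $D_M$ with the topological connecting homomorphism and verify naturality under $c$ — signs play no role, since only the vanishing of $\partial[D_M]$ is at issue. (For the implication ``Assumption~\ref{triva} $\Rightarrow$ liftability'' alone one may also argue concretely: when $\ind D_\Omega=0$ the construction of the preceding section applies, and the projection $\mathcal P_0=(1,\Pi_+)\in\mathcal A$ yields a class $[D_M\otimes 1_{\im\mathcal P_0}]\in K^0(C(\mathcal M))$ with $j^*[D_M\otimes 1_{\im\mathcal P_0}]=[D_M]$ by the commutativity of \eqref{sq3}.)
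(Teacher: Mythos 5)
Your proof follows the same route as the paper: reduce to the vanishing of $\partial[D_M]$ via the $K$-homology exact sequence \eqref{easy2a}, compute $\partial[D_M]=[D_X]\cdot\ind D_\Omega$ using \eqref{tensor1}, and conclude by injectivity of $[D_X]\cdot(-)$ (equivalently, $[D_X]$ generating $K^*(C(X))$ as a free $K_*(C(X))$-module). You additionally spell out, via naturality of the connecting homomorphism under the collapsing map $c\colon M\to\mathcal M$ and the Poincar\'e--Lefschetz boundary identity $\partial_M[D_M]=\pm[D_{\partial M}]$, exactly why $\partial[D_M]=\pm\pi_*[D_{\partial M}]$ --- a step the paper leaves implicit in its citation of \eqref{tensor1} --- which is a genuine and welcome clarification rather than a different method.
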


\begin{proof}

The $K$-homology exact sequence induced by $j$ has the form
\begin{equation}
\label{easy2a} \dotsm  K^{*+1}(C_0(\mathcal{M}\setminus X))\to
K^*(C(X))\to K^*(C(\mathcal{M}))\stackrel{j^*}\to
K^*(C_0(\mathcal{M}\setminus X))\stackrel{\partial }\to
K^{*+1}(C(X))\dotsm
\end{equation}
By exactness, we have  $[D_M]=j^*x$ for some $x\in
K^*(C(\mathcal{M}))$ if and only if  $\partial [D_M]=0$. The
element $\partial [D_M]$ is equal to
$$
\partial [D_M]=[D_X]\cdot \ind D_\Omega\in K^*(C(X))
$$
(see \eqref{tensor1}). Since $[D_X]$ is a generator of $K^*(C(X))$
as a free $K_*(C(X))$-module, we conclude that $\partial [D_M]=0$
is equivalent to $\ind D_\Omega=0$.

The proof of proposition is complete.
\end{proof}

\end{document}